\documentclass[reqno]{amsart}     
\usepackage{amsmath,amssymb,amsaddr}    
\usepackage{enumerate}
\usepackage{bbm}

\newtheorem{theorem}                   {Theorem} 
\newtheorem{thm}             [theorem] {Theorem} 
\newtheorem{lemma}           [theorem] {Lemma}

\newtheorem{cor}             [theorem] {Corollary}   
\newtheorem{proposition}     [theorem] {Proposition}

\theoremstyle{remark}

\newcommand{\eps}{\varepsilon}
\newcommand{\cH}{\mathcal{H}}

\newcommand{\cI}{\mathcal{I}}
\newcommand{\EE}{\mathbb{E}}

\newcommand{\Bi}{\mathrm{Bi}}

\newcommand{\Ind}[1]{\mathbbm{1}_{[#1]}}

\newcommand{\cond}{\; \middle\vert \;}


\newcommand{\conntime}{{\tau_c}}
\newcommand{\isoltime}{{\tau_i}}
\newcommand{\process}{{\{\cH^k(n,M)\}_M}}
\newcommand{\hknm}{{\cH^k(n,M)}}
\newcommand{\hknp}{{\cH^k(n,p)}}
\newcommand{\omone}{{\omega_1}}
\newcommand{\Po}{\mathrm{Po}}
\renewcommand{\Pr}{\mathbb{P}}

\begin{document}
\title[High-order connectivity in random hypergraphs]{Threshold and hitting time for high-order connectivity in random hypergraphs}
\thanks{The authors are supported by Austrian Science Fund (FWF): P26826\\
The second author is additionally supported by Austrian Science Fund (FWF): W1230, Doctoral Program ``Discrete Mathematics''.}

\author[O.~Cooley, M.~Kang and C.~Koch]{Oliver Cooley, Mihyun Kang and Christoph Koch}
\email{\{cooley,kang,ckoch\}@math.tugraz.at}
\address{Institute of Optimization and Discrete Mathematics,\\ Graz University of Technology, 8010 Graz, Austria}

\date{\today}

\begin{abstract}
We consider the following definition of connectivity in $k$-uniform hypergraphs: Two $j$-sets are $j$-connected if there is a walk of edges between them such that two consecutive edges intersect in at least $j$ vertices. We determine the threshold at which the random $k$-uniform hypergraph with edge probability $p$ becomes $j$-connected with high probability. We also deduce a hitting time result for the random hypergraph process -- the hypergraph becomes $j$-connected at exactly the moment when the last isolated $j$-set disappears. This generalises well-known results for graphs.
\end{abstract}

\maketitle
\noindent Keywords: \emph{random hypergraphs, connectivity, hitting time}\\
Mathematics Subject Classification: 05C65, 05C80

\section{Introduction}

\subsection{Preliminaries and main results}
In the study of random graphs, one very famous result concerns the \emph{hitting time} for connectivity. More precisely, if we add randomly chosen edges one by one to an initially empty graph on $n$ vertices, then with high probability at the moment the last isolated vertex gains its first edge, the whole graph will also become connected (this classical result was first proved by Bollob\'as and Thomason in~\cite{BT85}). This interplay between local and global properties is an example of the common phenomenon relating graph properties with their smallest obstruction: The graph can certainly not be connected while an isolated vertex still exists, but this smallest obstruction is also the critical one which is last to disappear.

In this paper we generalise this result to random $k$-uniform hypergraphs. For an integer $k\ge 2$, a $k$-uniform hypergraph consists of a set $V$ of vertices together with a set $E$ of edges, each consisting of $k$ vertices. (The case $k=2$ corresponds to a graph.) We need to define the notion of connectivity, for which there is a whole family of possible definitions. For any $1\le j \le k-1$, we say that two $j$-sets (of vertices) $J_1,J_2$ are \emph{$j$-connected} if there is a sequence of edges $E_1,\ldots,E_m$ such that
\begin{itemize}
\item $J_1 \subseteq E_1$ and $J_2\subseteq E_m$;
\item $|E_{i}\cap E_{i+1}|\ge j$ for all $1\le i \le m-1$.
\end{itemize}
In other words, we may walk from $J_1$ to $J_2$ using edges which consecutively intersect in at least $j$ vertices. A \emph{$j$-component} is a maximal set of pairwise $j$-connected $j$-sets.

Note that in the case $k=2,j=1$ this is simply the usual definition of connectedness for graphs. More generally, for arbitrary $k\ge 2$ the case $j=1$ is by far the most well-studied. This is not necessarily because the definition is more natural, but rather because it is much easier to visualise and the analysis is often significantly simpler. In this paper we will be interested in arbitrary $1\le j \le k-1$ and $k\ge 3$.

There is also more than one model for random hypergraphs. We first define the \emph{uniform model}: Given any natural numbers $k,M,n$ such that $M\le \binom{n}{k}$, the random hypergraph $\hknm$ is a hypergraph chosen uniformly at random from all hypergraphs on vertex set $\{1,\ldots,n\}$ which have $M$ edges. This is closely related to the \emph{random hypergraph process} $\process$ which is defined as follows:
\begin{itemize}
\item $\cH^k(n,0)$ is the graph on vertex set $\{1,\ldots,n\}$ with no edges;
\item For $1\le M \le \binom{n}{k}$, $\hknm$ is obtained from $\cH^k(n,M-1)$ by adding an edge chosen uniformly at random from among those not already present.
\end{itemize}
Note that the distribution of the random hypergraph obtained in the $M$-th step of the process is the same as in the uniform model $\hknm$, so the notation is consistent.

We consider asymptotic properties of random hypergraphs and throughout this paper any asymptotics are as $n\to \infty$. In particular we say \emph{with high probability} (or \emph{whp}) to mean with probability tending to $1$ as $n\to \infty$.

We say that a $k$-uniform hypergraph is \emph{$j$-connected} if there is one component which contains all $j$-sets. A $j$-set is \emph{isolated} if it is not contained in any edges. It is trivial that if a hypergraph contains isolated $j$-sets, then it is not $j$-connected (assuming it has more than $j$ vertices). Our main result is that this trivial smallest obstruction is also the critical one in a random hypergraph.

Let $\conntime=\conntime(n,j,k)$ denote the time step in the hypergraph process $\process$ at which the hypergraph becomes $j$-connected. Similarly, let $\isoltime$ denote the time at which the last isolated $j$-set disappears. Note that the properties of being $j$-connected or of having no isolated $j$-set are certainly monotone increasing properties, so these two variables are well-defined.

\begin{thm}\label{thm:hittingtime}
For any $1\le j \le k-1$ and $k\ge 3$, with high probability in the random hypergraph process $\process$ we have $\conntime = \isoltime$.
\end{thm}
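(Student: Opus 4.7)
The proof will follow the classical sandwich strategy for hitting-time results. A fixed $j$-set is isolated in $\hknp$ with probability $(1-p)^{\binom{n-j}{k-j}}$, so the expected number of isolated $j$-sets in $\hknm$ is of constant order precisely when $M\approx M^* := \frac{j\log n}{\binom{n-j}{k-j}}\binom{n}{k}$. I fix a slowly diverging $\omega=\omega(n)\to\infty$, set $M_\pm := (1\pm \omega/\log n)M^*$, and aim to confine both $\conntime$ and $\isoltime$ to the window $[M_-,M_+]$ whp while simultaneously showing that inside this window the two random times coincide.

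\smallskip
The first ingredient is a first/second-moment computation for the number of isolated $j$-sets in $\hknm$ (cleanest in $\hknp$ and transferred to $\hknm$ by the standard coupling): whp, isolated $j$-sets exist at $M=M_-$ and none remain at $M=M_+$, so $\isoltime\in[M_-,M_+]$. The second-moment computation is tame because pairs of $j$-sets with small intersection are isolated almost independently, and the number of pairs with large intersection is of lower order. The structural heart of the argument is to show that whp every $j$-component of $\hknm$ at $M=M_-$ is either a single isolated $j$-set or the unique giant $j$-component of size $(1-o(1))\binom{n}{j}$. Existence and uniqueness of the giant in this deeply supercritical regime is already known. To rule out $j$-components of intermediate size $2\le s\le s_0$ I use a first-moment union bound: a $j$-component on $s$ $j$-sets must contain at least $\lceil(s-1)/(\binom{k}{j}-1)\rceil$ hyperedges (each new edge adds at most $\binom{k}{j}-1$ previously unseen $j$-sets to a $j$-component), and moreover every edge meeting its vertex set in $\ge j$ vertices must lie entirely inside it—a boundary-isolation condition. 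Enumerating spanning edge sub-structures of such a component and multiplying by the edge-present/edge-absent probabilities gives the required bound.

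\smallskip
To propagate the structural statement from the single time $M_-$ to the whole window, I expose the edges in two rounds. Having conditioned on a realisation of $\hknm$ at $M=M_-$ satisfying the structural step, the additional $M_+-M_-$ edges behave essentially as a uniform random sample, and since there are only polylogarithmically many $j$-sets that were isolated at time $M_-$, a direct union bound shows whp that no second-round edge contains two of them. Consequently, at every $M\in[M_-,M_+]$ any $j$-set that is not currently isolated already lies in the (only growing) giant; equivalently, $j$-connectivity at time $M$ is equivalent to the absence of isolated $j$-sets, and this yields $\conntime=\isoltime$ whp. The main obstacle is the structural step: parametrising and enumerating $j$-components of size $s$ uniformly for $2\le s\le s_0$ so that the first-moment bound beats the $\binom{\binom{n}{j}}{s}$ choice-of-family factor, especially at the upper end of the range where $s$ approaches the size of the giant and the naive enumeration is not sharp enough.
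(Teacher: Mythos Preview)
Your sandwich-and-propagate framework is sound, and the two-round exposure to carry the structural statement from $M_-$ through the window is a legitimate alternative to the paper's direct union bound over all $M$ in the window. The gap is exactly where you locate it: the structural step ruling out $j$-components of intermediate size at a single time.

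Your proposed first-moment argument is precisely the ``standard technique generalised from the graph case'' that the authors explicitly warn fails for $j\ge 2$, because ``$j$-components in a hypergraph may be strangely and non-intuitively distributed''. Two concrete problems. First, your boundary-isolation condition is misstated: a $j$-component $C$ is a set of $j$-sets, not of vertices, and an edge meeting the vertex span of $C$ in $\ge j$ vertices need not contain any $j$-set of $C$, so nothing forces it inside $C$. The correct condition is that every $k$-set containing both a $j$-set in $C$ and a $j$-set outside $C$ must be a non-edge. Second, and fatally, even this correct condition yields no usable lower bound on the number of forced non-edges once $s$ exceeds polylogarithmic size, because the $s$ $j$-sets of $C$ need not sit on roughly $s^{1/j}$ vertices; the enumeration of candidate components and the non-edge count do not close. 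The paper's remedy is to import a structural lemma from~\cite{CoKaKo14}: already at a much smaller probability $p^*$ one finds a component containing a \emph{smooth} set $S$ of $j$-sets, meaning every $(j-1)$-set of vertices lies in $(1\pm o(1))\eps^3 n$ elements of $S$. Coupling $\cH^k(n,p)=\cH^k(n,p^*)\cup\cH^k(n,p^\dagger)$, any well-constructed piece $H$ of $j$-size $\log\log n$ lying outside the component of $S$ has, for each of its $j$-sets, a $(j-1)$-subset that extends to $\sim\eps^3 n$ $j$-sets of $S$, each such pair forcing $\binom{n-j-1}{k-j-1}$ non-edges; this gives $\Theta(\eps^3 n^{k-j}\log\log n)$ forced non-edges \emph{independently of the shape of $H$}, which is what kills the first moment. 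Components of size between $2$ and $\log\log n$ are handled by the direct first-moment bound you sketch, which does go through in that restricted range.
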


The case $j=1$ of this theorem was already proved as a special case of the results in~\cite{Poole14}.

The uniform model and hypergraph process allow us to formulate exact hitting time results such as Theorem~\ref{thm:hittingtime}. However, the drawback is that the analysis of the model can become tricky due to the fact that the presence of different edges is not independent (the total number is fixed). For this reason, it is often easier to analyse the \emph{binomial model}: $\hknp$ is a random $k$-uniform hypergraph on vertex set $\{1,\ldots,n\}$ in which each $k$-set is an edge with probability $p$ independently. In Section~\ref{sec:contiguity} we will show that if $p=M/\binom{n}{k}$, then the two models are very similar and we can transfer results from one model to the other.

For the proof of Theorem~\ref{thm:hittingtime} we will also make use of the following result (Theorem~\ref{thm:poisson}), which is interesting in itself and is therefore stated in a significantly more general form than we need for Theorem~\ref{thm:hittingtime}. For integer valued random variables $Z$ and $Z'$ we denote their \emph{total variation distance} by $d_{TV}(Z,Z')$, i.e.\
\begin{equation*}
d_{TV}(Z,Z')=\frac{1}{2}\sum_{i}\left|\Pr\left(Z=i\right)-\Pr\left(Z'=i\right)\right|.
\end{equation*}
For integer-valued random variables $X_n$ and $Y$, we say \emph{$X_n$ converges in distribution to $Y$}, denoted by $X_n \stackrel{d}{\longrightarrow} Y$, if for every integer $i$ we have $\Pr(X_n=i)\rightarrow \Pr(Y_n=i)$.

\begin{theorem}\label{thm:poisson}
Suppose $p=\frac{j\log n+s\log\log n +c_n}{\binom{n}{k-j}}$, where $c_n=o(\log n)$. For any integer $s\ge0$ let $D_s$ be the number of $j$-sets of degree precisely $s$ in $\cH^k(n,p)$ (i.e.\ which lie in $s$ edges). Then we have
\begin{equation}\label{Eq:TVBound}
d_{TV}\left(D_s,\Po \left(\EE\left(D_s\right)\right)\right)=O(n^{-j}\log n).
\end{equation}
In particular for any constants $s,c$, we have
\begin{center}
\begin{tabular}{rll}
$(i)$ & $D_s=0$ whp & if $c_n\rightarrow \infty$;\\
$(ii)$ & $D_s \stackrel{d}{\longrightarrow} \Po\left(\frac{j^s e^{-c}}{j!s!}\right)$ & if $c_n\rightarrow c$;\\
$(iii)$ & $D_s\to \infty$ whp & if $c_n \to -\infty$.
\end{tabular}\end{center}
\end{theorem}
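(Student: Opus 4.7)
The approach is Chen--Stein Poisson approximation applied to the sum $D_s = \sum_J X_J$, where $J$ ranges over $j$-subsets of $[n]$ and $X_J := \Ind{\deg_\cH(J) = s}$. The bound~\eqref{Eq:TVBound} will come from sharp estimates of the single-point and pairwise probabilities of the $X_J$.

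\emph{Single-point probabilities.} Each $X_J$ depends only on the $N := \binom{n-j}{k-j}$ indicators for potential $k$-edges containing $J$, so $p_J := \Pr(X_J = 1) = \binom{N}{s}p^s(1-p)^{N-s}$. Using $N/\binom{n}{k-j} = 1 + O(n^{-1})$, one has $Np = j\log n + s\log\log n + c_n + O(n^{-1}\log n)$, and approximating $(1-p)^{N-s}$ by $e^{-Np}$ with the error $O(Np^2)$ absorbed yields
\[
p_J = \frac{j^s e^{-c_n}}{s!}\,n^{-j}\bigl(1+O(n^{-1}\log n)\bigr), \qquad \lambda := \EE D_s = \binom{n}{j}p_J = \frac{j^s e^{-c_n}}{j!\,s!}\bigl(1+o(1)\bigr).
\]

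\emph{Pairwise probabilities and covariances.} For distinct $J,J'$ with $|J\cap J'| = i$, the edges through $J$ and through $J'$ share exactly $M := \binom{n-2j+i}{k-2j+i}$ potential $k$-edges; if $i < 2j-k$ then $M=0$ and $X_J$, $X_{J'}$ are independent. Otherwise, partitioning the $2N-M$ potential edges incident to $J\cup J'$ into three independent groups ``only $J$'', ``only $J'$'' (each of size $N-M$) and ``both'' (size $M$), and summing over the number $c$ of present common edges, I obtain
\[
\Pr(X_J X_{J'} = 1) = p_J p_{J'}\bigl(1 + O(Mp)\bigr) = p_J p_{J'}\bigl(1 + O(n^{i-j}\log n)\bigr),
\]
so $|\mathrm{Cov}(X_J,X_{J'})| = O(p_J p_{J'}\cdot n^{i-j}\log n)$. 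Since the number of ordered pairs with $|J\cap J'|=i$ is $\Theta(n^{2j-i})$, each intersection size contributes $O(n^{2j-i}\cdot n^{-2j}\cdot n^{i-j}\log n) = O(n^{-j}\log n)$ to the total, and summing over the $O(1)$ relevant values of $i$ gives $\sum_{J\ne J'}|\mathrm{Cov}(X_J,X_{J'})| = O(n^{-j}\log n)$, while $\sum_J p_J^2 = O(n^{-j})$.

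\emph{Chen--Stein and conclusion.} For $s=0$, the indicators are decreasing functions of the edge set and hence positively related, so the Barbour--Holst--Janson bound
\[
d_{TV}\bigl(D_s,\Po(\lambda)\bigr)\le \min(1,\lambda^{-1})\Bigl(\mathrm{Var}(D_s)-\lambda+2\sum_J p_J^2\Bigr) = \min(1,\lambda^{-1})\Bigl(\sum_{J\ne J'}\mathrm{Cov}(X_J,X_{J'})+\sum_J p_J^2\Bigr)
\]
immediately yields $O(n^{-j}\log n)$. The main obstacle is the non-monotone case $s\ge 1$, where $\{\deg(J)=s\}$ is not monotone and positive relation fails: here I would instead use a size-bias coupling, realising $D_s\mid X_J=1$ by resampling the edges through $J$ to a uniformly random $s$-subset. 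This modification changes $X_{J'}$ only when some common edge of $J$ and $J'$ differs \emph{and} the degree of $J'$ lands within $O(1)$ of $s$, events whose joint probability is $O(p_{J'}\cdot Mp)$; plugged into Chen's size-bias inequality this reproduces the same $O(n^{-j}\log n)$ bound. Statements (i)--(iii) then follow immediately from~\eqref{Eq:TVBound}, since $\lambda\to 0$, $\lambda\to j^se^{-c}/(j!s!)$, respectively $\lambda\to\infty$ in the three regimes, and $\Po(\lambda_n)$ converges accordingly ($\Pr(\Po(\lambda)=0)\to 1$ in (i); distributional convergence in (ii); $\Pr(\Po(\lambda)\le K)\to 0$ for every fixed $K$ in (iii)).
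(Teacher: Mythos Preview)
Your approach is correct and essentially matches the paper's: both apply Chen--Stein via a size-bias coupling (the paper quotes Theorem~1.B of Barbour--Holst--Janson and constructs $\cH^*$ from $\cH^k(n,p)$ by adding or deleting edges through a fixed $j$-set $J_0$ until $\deg(J_0)=s$, then bounds $\EE|D_s-D_s^*|$ by a case split on the sign of $\deg(J_0)-s$, obtaining the same $O(n^{-j}\log n)$). Your separate treatment of $s=0$ via positive relation and your explicit covariance computation are not in the paper, which runs the one coupling uniformly for all $s$, but these are minor presentational differences leading to the same bound.
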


These two theorems together give the following immediate corollary.

\begin{cor}
Let $p=\frac{j\log n + c_n}{\binom{n}{k-j}}$.
\begin{enumerate}
\item If $c_n\rightarrow -\infty$ then with high probability $\hknp$ contains isolated $j$-sets (and is therefore not $j$-connected).
\item If $c_n\rightarrow \infty$ then with high probability $\hknp$ is $j$-connected (and therefore contains no isolated $j$-sets).
\end{enumerate}
In other words, the properties of being $j$-connected and having no isolated $j$-sets both undergo a (sharp) phase transition at threshold
\[
p_0 = \frac{j\log n}{\binom{n}{k-j}}.
\]
\end{cor}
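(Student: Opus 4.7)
\medskip
\noindent\textbf{Proof plan.} Both parts correspond to the choice $s=0$ in Theorem~\ref{thm:poisson}, since in that case $D_0$ counts precisely the isolated $j$-sets. The overall plan is therefore to apply Theorem~\ref{thm:poisson} with $s=0$ for both parts, combined with Theorem~\ref{thm:hittingtime} and the model-transfer results from Section~\ref{sec:contiguity} for the $j$-connectivity assertion in part~(2).

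For part~(1), the hypothesis $p=\frac{j\log n+c_n}{\binom{n}{k-j}}$ already fits the form of Theorem~\ref{thm:poisson} with $s=0$ (assuming, as usual, $c_n=o(\log n)$; otherwise monotonicity makes the conclusion only stronger). Part~(iii) of Theorem~\ref{thm:poisson} then gives $D_0\to\infty$ \whp, so \whp at least one isolated $j$-set exists. As observed just above Theorem~\ref{thm:hittingtime}, the presence of an isolated $j$-set already forbids $j$-connectivity (once $n>j$), so both claims of~(1) follow.

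For part~(2), Theorem~\ref{thm:poisson}(i) with $s=0$ directly gives $D_0=0$ \whp, which is the parenthetical statement. The main task is the $j$-connectivity itself; for this I would reduce to the uniform model, where Theorem~\ref{thm:hittingtime} lives. Concretely I would fix a slightly smaller density $p^-=\frac{j\log n+c_n^-}{\binom{n}{k-j}}$ with $c_n^-\to\infty$ and $c_n^-\ll c_n$ (so in particular $p^-<p$ for large $n$), set $M^-=\lceil p^-\binom{n}{k}\rceil$, and invoke the standard coupling of Section~\ref{sec:contiguity} under which $\cH^k(n,M^-)\subseteq\hknp$ \whp. Applying Theorem~\ref{thm:poisson}(i) to $p^-$ and transferring to the uniform model via Section~\ref{sec:contiguity} shows that $\cH^k(n,M^-)$ has no isolated $j$-sets \whp, i.e.\ $\isoltime\le M^-$ \whp in the random hypergraph process. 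Theorem~\ref{thm:hittingtime} then gives $\conntime=\isoltime\le M^-$ \whp, so $\cH^k(n,M^-)$ is $j$-connected \whp, and monotonicity together with the coupling transfers $j$-connectivity to $\hknp$.

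The only place requiring care is the model-transfer bookkeeping: the event ``no isolated $j$-set'' needs to hold \whp simultaneously in $\hknp$, in $\cH^k(n,M^-)$, and in the process at time $M^-$, and the Bernoulli/uniform coupling needs to be sharp enough that the $o(1)$ error terms combine cleanly. I expect no genuine obstacle here since this is exactly what Section~\ref{sec:contiguity} is designed to provide, but it is the one step beyond a direct citation of Theorems~\ref{thm:hittingtime} and~\ref{thm:poisson}.
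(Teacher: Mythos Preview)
Your plan is correct and matches the paper's intended argument: the corollary is stated as ``immediate'' from Theorems~\ref{thm:hittingtime} and~\ref{thm:poisson}, and your write-up is a faithful unpacking of that deduction, with the only nontrivial content being the binomial/uniform transfer in part~(2), which you handle via the standard device of passing to a slightly smaller $p^-$ and using concentration of the edge count plus monotonicity.

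One minor point of bookkeeping: Section~\ref{sec:contiguity} as written only states the transfer from $\hknp$ to $\hknm$ (Lemma~\ref{lem:monotonecontiguity}) and the conditioning bound (Lemma~\ref{lem:ptoM}); it does not explicitly provide the inclusion coupling $\cH^k(n,M^-)\subseteq\hknp$ that you invoke. The paper does remark that the reverse transfer ``is also possible, with some small modifications,'' and your argument (choose $M^-<Np$, use $e(\hknp)\ge M^-$ whp by a Chernoff bound, then monotonicity of $j$-connectedness along the process) is exactly the standard way to supply it. So the substance is fine; just be aware that you are filling in a step the paper gestures at rather than citing something it states.
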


\subsection{Methods}
The main contribution of this paper is to deduce Theorem~\ref{thm:hittingtime} from Theorem~\ref{thm:poisson}. Attempting to prove this directly using standard techniques generalised from the graph case does not work because $j$-components in a hypergraph may be strangely and non-intuitively distributed. To overcome this problem we quote a powerful result from~\cite{CoKaKo14}, which guarantees one component with a large subset which is in some sense \emph{smoothly distributed}. We then show that with high probability all non-trivial components are connected to this smooth subset.

\subsection{Notation and definitions}
We introduce a few more definitions before we proceed with the proofs. We fix $k\ge 3$ and $1\le j\le k-1$ for the remainder of the paper. The \emph{order} of a hypergraph is the number of vertices it contains, while its \emph{size} is the number of edges. Since a $j$-component consists of $j$-sets of vertices, we sometimes view it as a $j$-uniform hypergraph in which the edges are the $j$-sets in the component. (In particular, the size of a $j$-component is the number of $j$-sets it contains.)

We will sometimes need to relate the $j$-sets of a component to the edges of the hypergraph which connect them. To allow us to do this, for a $k$-uniform hypergraph $H$ we define the \emph{$j$-size} of $H$ to be the number of $j$-sets contained in edges of $H$.

\section{Contiguity of $\hknm$ and $\hknp$}\label{sec:contiguity}
We need to know that $\hknp$ and $\hknm$ are roughly equivalent. We quote a result from~\cite{JLRBook}, which in turn is based on previous arguments by Bollob\'as and {\L}uczak. In fact, \cite{JLRBook} considers a more general setting than we require here, but what we state is an immediate corollary of the results there (see~\cite{JLRBook}, Corollary~1.16). Let $N=\binom{n}{k}$ and to ease notation, for some property $Q$ we will denote by $\Pr_M(Q)= \Pr(\hknm\in Q)$ the probability that $\hknm$ has property $Q$. $\Pr_p(Q)$ is defined similarly.

\begin{lemma}
Let $Q$ be some monotone increasing property of $k$-uniform hypergraphs and let $M=Np\rightarrow \infty$. Then
\begin{enumerate}\label{lem:monotonecontiguity}
\item $\Pr_p(Q)\rightarrow 1$ implies $\Pr_M(Q)\rightarrow 1$;
\item $\Pr_p(Q)\rightarrow 0$ implies $\Pr_M(Q)\rightarrow 0$.
\end{enumerate}
\end{lemma}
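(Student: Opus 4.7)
The plan is to exploit the standard fact that conditioning $\hknp$ on having exactly $m$ edges yields the uniform distribution $\hknm$. Writing $e(\hknp)$ for the (random) number of edges, which is distributed as $\Bi(N,p)$, this gives the decomposition
$$\Pr_p(Q) = \sum_{m=0}^{N} \Pr(\Bi(N,p) = m)\, \Pr_m(Q).$$
Since $Q$ is monotone increasing, the sequence $\Pr_m(Q)$ is non-decreasing in $m$. The strategy is then to combine this monotonicity with the concentration of $\Bi(N,p)$ around its mean $M = Np$.

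For part (1), suppose $\Pr_p(Q) \to 1$. Splitting the sum at $M$, bounding $\Pr_m(Q) \le \Pr_M(Q)$ for $m \le M$ and $\Pr_m(Q) \le 1$ for $m > M$, I would obtain
$$\Pr_p(Q) \le \Pr(\Bi(N,p) \le M)\, \Pr_M(Q) + \Pr(\Bi(N,p) > M),$$
which rearranges to
$$\Pr_M(Q) \ge 1 - \frac{1 - \Pr_p(Q)}{\Pr(\Bi(N,p) \le M)}.$$
For part (2), suppose $\Pr_p(Q) \to 0$. Using $\Pr_m(Q) \ge \Pr_M(Q)$ for $m \ge M$ yields
$$\Pr_p(Q) \ge \Pr(\Bi(N,p) \ge M)\, \Pr_M(Q),$$
hence $\Pr_M(Q) \le \Pr_p(Q)/\Pr(\Bi(N,p) \ge M)$.

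It remains to verify that both $\Pr(\Bi(N,p) \le M)$ and $\Pr(\Bi(N,p) \ge M)$ are bounded away from $0$. Since $N = \binom{n}{k}$ is very large and $M = Np \to \infty$ with $p = M/N \to 0$, the variance satisfies $\Var(\Bi(N,p)) = Np(1-p) \sim M \to \infty$. The central limit theorem then guarantees that both probabilities tend to $1/2$; alternatively, the elementary fact that the median of a binomial distribution differs from its mean by at most $1$ already produces the required uniform lower bound on each tail.

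The only real obstacle is making the binomial concentration statement precise, but this is a standard input (and is effectively the content of Corollary 1.16 in~\cite{JLRBook}). Apart from this, everything reduces to a direct manipulation of the conditional decomposition and the monotonicity of $\Pr_m(Q)$.
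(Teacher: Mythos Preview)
The paper does not actually prove this lemma; it simply quotes it as an immediate corollary of Corollary~1.16 in~\cite{JLRBook}. Your argument is correct and is essentially the standard proof underlying that reference: the conditional decomposition together with monotonicity of $\Pr_m(Q)$ reduces both implications to showing that $\Pr(\Bi(N,p)\le M)$ and $\Pr(\Bi(N,p)\ge M)$ are bounded away from zero, and since here $M=Np$ is an integer it is exactly the median of $\Bi(N,p)$, giving both probabilities at least $1/2$. (Your aside that $p\to 0$ is not part of the hypothesis and is not needed for the median argument, though it does hold in every application the paper makes.)
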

This lemma allows us to transfer properties from $\hknp$ to $\hknm$ (transferring in the other direction is also possible, with some small modifications, but we will not need to do this here). However, this only works for monotonically increasing properties. This is fine for the properties of being $j$-connected or of having no isolated $j$-sets. However, in the proof of Theorem~\ref{thm:hittingtime} we will need to consider the probability of having a component of size $r$, for various fixed $r$. This property is not even convex (and nor is its complement) and so for this case we will need some more careful arguments.

The following standard argument allows us to transfer high probability events from the binomial to the uniform model provided that the failure probability is small enough.
\begin{lemma}\label{lem:ptoM}
Let $Q$ be an arbitrary property of $k$-uniform hypergraphs. Suppose $M\rightarrow \infty$ and $p=M/N \rightarrow 0$. Then
\[
\Pr_M(Q) \le \frac{\Pr_p(Q)}{\Pr(e(\cH^k(n,p)=M)} = \Theta(M^{1/2})\Pr_p(Q).
\]
\end{lemma}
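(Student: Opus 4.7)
The plan is a short conditioning argument followed by a standard local-limit estimate. Writing $e := e(\hknp)$ for the random number of edges of $\hknp$, the key observation is that the conditional distribution of $\hknp$ given $e = M$ is exactly that of $\hknm$: conditioning the $N = \binom{n}{k}$ independent edge-indicators on producing exactly $M$ successes gives the uniform measure over $k$-uniform hypergraphs with $M$ edges. Thus by the law of total probability,
\[
\Pr_p(Q) = \sum_{M'=0}^{N} \Pr_p(Q \mid e = M')\,\Pr_p(e = M') \;\geq\; \Pr_p(Q \mid e = M)\,\Pr_p(e = M) = \Pr_M(Q)\,\Pr_p(e = M),
\]
which rearranges immediately to yield the first inequality claimed in the lemma.

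For the asymptotic identity, I would evaluate the point probability at the mean of a binomial. Since $e \sim \mathrm{Bi}(N,p)$ has mean $Np = M \to \infty$ and variance $Np(1-p) = M(1 - o(1))$ (using $p \to 0$), either the local central limit theorem for the binomial or a direct Stirling-formula estimate of
\[
\Pr_p(e = M) = \binom{N}{M} p^{M} (1-p)^{N-M}
\]
yields $\Pr_p(e = M) = (1+o(1))/\sqrt{2\pi M(1-p)} = \Theta(M^{-1/2})$. Hence $1/\Pr_p(e=M) = \Theta(M^{1/2})$, which combined with the first step gives the second, asymptotic part of the lemma.

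There is no genuine obstacle here; the two hypotheses each play a transparent role — $M \to \infty$ is what validates the local-limit (or Stirling) approximation at the central value, while $p \to 0$ is used only to absorb the $(1-p)$ factor from the variance into the $\Theta$-notation. The only minor bookkeeping is that $M$ must actually lie in the support $\{0, 1, \ldots, N\}$ of $\mathrm{Bi}(N,p)$, but this is automatic from the very definition of the uniform model $\hknm$.
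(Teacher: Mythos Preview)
Your proposal is correct and matches the paper's own proof essentially line for line: the paper also drops all but the $m=M$ term in the total-probability decomposition $\Pr_p(Q)=\sum_m \Pr_m(Q)\Pr(e(\hknp)=m)$ to obtain the inequality, and then applies Stirling's approximation directly to $\binom{N}{M}p^M(1-p)^{N-M}$ to get $\Theta(M^{-1/2})$. Your additional remark that conditioning $\hknp$ on $e=M$ yields $\hknm$ simply makes explicit what the paper's notation $\Pr_m(Q)$ already encodes.
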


\begin{proof}
The inequality follows from the fact that
\begin{align*}
\Pr_p(Q) & = \sum_{m=0}^N \Pr_m(Q)\Pr(e(\cH^k(n,p))=m)\\
& \ge \Pr_M(Q)\Pr(e(\cH^k(n,p))=M).
\end{align*}
For the equality we use Stirling's approximation to deduce that
\begin{align*}
\Pr(e(\cH^k(n,p))=M) & = \binom{N}{M} p^M (1-p)^{N-M}\\
& =  \Theta (1)\sqrt{\frac{N}{M(N-M)}}\frac{N^N}{M^M (N-M)^{N-M}}p^M (1-p)^{N-M}\\
& = \Theta(M^{-1/2}).\qedhere
\end{align*}
\end{proof}

\section{Proof of Theorem~\ref{thm:poisson}}\label{sec:poisson}

Let $C=\binom{k}{j}-1$. Fix an integer $s\ge 0$ and suppose $p=\frac{j\log n+s\log\log n +c_n}{\binom{n}{k-j}}$, where $c_n=o(\log n).$ Then the expected number of $j$-sets of degree $s$ satisfies
\begin{align}
\nonumber\EE(D_s)&=\binom{n}{j}\binom{\binom{n-j}{k-j}}{s}p^s(1-p)^{\binom{n-j}{k-j}-s}\\
\nonumber&=(1+o(1))\frac{n^j}{j!}\frac{\left(\frac{n^{k-j}}{(k-j)!}\right)^s}{s!}p^s \exp\left(-p\binom{n}{k-j}\right)\\
\nonumber&=(1+o(1))\frac{1}{j!s!}e^{s(k-j)\log n- s\log((k-j)!)+s\log p -s\log\log n -c_n}\\
&=(1+o(1))\frac{j^s}{j!s!}e^{-c_n},\label{Eq:Expectation}
\end{align}
since
\begin{align*}
\log p=-(k-j)\log n+\log\log n+\log(j(k-j)!) +O\left(\frac{\log\log n+\left|c_n\right|}{\log n}+\frac{1}{n}\right).
\end{align*}

For the Poisson-approximation we use the Chen-Stein method (cf.~\cite{BarbourHolstJanson92}). For any $j$-set $J$ we denote its degree in $\cH^k(n,p)$ by $\deg(J)$ and analyse how $D_s$ changes by conditioning on the event $\{\deg(J_0)=s\}$ for an arbitrary  $j$-set $J_0$. 

 First we construct $\cH^k(n,p)$ and denote by $E_0$ the set of edges containing $J_0$, then we distinguish three cases:
\begin{enumerate}[(a)]
\item If $\deg(J_0)<s$, add $s-\deg(J_0)$ distinct $k$-sets chosen uniformly at random from $\left\{K\in \binom{V}{k}\cond J_0\subset K\right\}\setminus E_0$ to the hypergraph;
\item If $\deg(J_0)=s$, do nothing;
\item If $\deg(J_0)>s$, delete a set of $\deg(J_0)-s$ edges chosen uniformly at random from $E_0$.
\end{enumerate}
We denote the resulting hypergraph by $\cH^*=\cH^*(J_0)$. For any $j$-set $J$ we write $\deg^*(J)$ for its degree in $\cH^*$ and $D_s^* = D_s^*(J_0)$ for the number of $j$-sets $J\neq J_0$ such that $\deg^*(J)=s$. Furthermore observe that this construction provides a coupling of $\cH^k(n,p)$ and $\cH^*$ such that removing all edges incident with $J_0$ in either one of them yields the same random hypergraph $\cH^-=\cH^-(J_0)$.  For any $j$-set  $J$ we write $\deg^-(J)$ for its degree in $\cH^-$. 

We use the following form of the Chen-Stein approximation given by Theorem~1.B in~\cite{BarbourHolstJanson92}.
\begin{theorem}[Chen-Stein approximation \cite{BarbourHolstJanson92}]\label{thm:chen-stein}
Given a finite index set $\cI$ and a random variable  $W=\sum_{i\in \cI} Z_i$, where $Z_i$ is a Bernoulli random variable with parameter $p_i\in [0,1]$ and denote by $\lambda=\sum_{i\in \cI} p_i$ its expectation. Assume that for each $i\in \cI$ there is a pair of coupled random variables $(U_i,V_i)$ such that $U_i$ has the distribution of $W$ and $V_i+1$ has the distribution of $W$ conditioned on $Z_i=1$. Then we have 
\begin{align*}
d_{TV}\big(W,\Po(\lambda)\big)&\leq \min\{1,\lambda^{-1}\}\sum_{i\in \cI}p_i\EE\left(\left|U_i-V_i\right|\right).
\end{align*}
 \end{theorem}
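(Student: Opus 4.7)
The plan is to follow the coupling formulation of Stein's method for Poisson approximation. The starting point is the Stein characterisation: a non-negative integer valued random variable $Y$ is distributed as $\Po(\lambda)$ if and only if $\EE\bigl[\lambda f(Y+1) - Y f(Y)\bigr] = 0$ for every bounded $f:\Nat \to \mathbb{R}$. For any subset $A \subseteq \Nat$ I would introduce the associated \emph{Stein equation}
\[
\lambda f(x+1) - x f(x) = \Ind{x \in A} - \Pr\bigl(\Po(\lambda) \in A\bigr) \qquad (x \geq 0),
\]
set $f_A(0) := 0$, and solve the recursion explicitly; this yields a closed form for $f_A$ in terms of partial sums of the Poisson pmf. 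Since total variation distance equals $\sup_A \bigl|\Pr(W \in A) - \Pr(\Po(\lambda) \in A)\bigr|$, bounding this difference uniformly in $A$ delivers the desired estimate.

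The second step is the key analytic ingredient: I would prove that, uniformly over all $A \subseteq \Nat$,
\[
\|\Delta f_A\|_\infty := \sup_{x \geq 0}\bigl|f_A(x+1) - f_A(x)\bigr| \leq \min\bigl\{1,\lambda^{-1}\bigr\}.
\]
This is the source of the $\min\{1,\lambda^{-1}\}$ factor in the conclusion and is the technically substantial part of the proof; it is established by rewriting $\Delta f_A(x)$ as a difference of Poisson tail expressions from the closed-form solution and showing that the extremal values are attained at singleton sets $A = \{x\}$, following the Barbour--Eagleson estimate.

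The final step assembles the pieces via the given couplings. Using $U_i \stackrel{d}{=} W$ and $V_i + 1 \stackrel{d}{=} (W \mid Z_i = 1)$ together with the identity $\EE[Z_i g(W)] = p_i \EE[g(W) \mid Z_i = 1]$, one computes
\begin{align*}
\EE[W f_A(W)] &= \sum_{i \in \cI} \EE\bigl[Z_i f_A(W)\bigr] = \sum_{i \in \cI} p_i \EE\bigl[f_A(V_i+1)\bigr],\\
\EE[\lambda f_A(W+1)] &= \lambda\, \EE[f_A(W+1)] = \sum_{i \in \cI} p_i \EE\bigl[f_A(U_i+1)\bigr].
\end{align*}
Subtracting and invoking the Stein equation gives
\[
\Pr(W \in A) - \Pr\bigl(\Po(\lambda) \in A\bigr) = \sum_{i \in \cI} p_i \EE\bigl[f_A(U_i+1) - f_A(V_i+1)\bigr].
\]
Bounding each integrand by $\|\Delta f_A\|_\infty \cdot |U_i - V_i|$, applying the uniform estimate from step two, and taking the supremum over $A$ yields exactly the stated total variation bound.

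The hard part of this program is the sharp uniform estimate $\|\Delta f_A\|_\infty \leq \min\{1,\lambda^{-1}\}$; once it is in hand, the derivation from the couplings is an essentially algebraic rearrangement, and the flexibility in choosing $(U_i,V_i)$ is precisely what makes the theorem useful in the application to $D_s$ in Theorem~\ref{thm:poisson}.
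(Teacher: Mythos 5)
The paper does not prove this statement at all: it is quoted verbatim as Theorem~1.B of the cited monograph of Barbour, Holst and Janson, so there is no internal proof to compare against. Your proposal reconstructs the standard Stein--Chen coupling proof, and it is correct in structure: the identities $\EE[W f_A(W)]=\sum_{i\in\cI}p_i\,\EE[f_A(V_i+1)]$ and $\lambda\,\EE[f_A(W+1)]=\sum_{i\in\cI}p_i\,\EE[f_A(U_i+1)]$ are exactly how the couplings enter, the Stein equation converts the difference into $\Pr(W\in A)-\Pr(\Po(\lambda)\in A)$, and the Lipschitz-type bound $|f_A(u)-f_A(v)|\le \|\Delta f_A\|_\infty\,|u-v|$ plus a supremum over $A$ (which agrees with the paper's factor-$\tfrac12$ definition of total variation distance) gives the stated inequality. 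The one step you defer, the uniform estimate $\|\Delta f_A\|_\infty\le\min\{1,\lambda^{-1}\}$, is indeed the technical heart; it is the Barbour--Eagleson bound (the sharper form $\lambda^{-1}(1-e^{-\lambda})$ holds and implies what you need), and your indication of how it is proved --- reducing to singleton sets via $f_A=\sum_{x\in A}f_{\{x\}}$ and analysing Poisson tail expressions --- is the standard argument. So your route coincides with that of the cited source rather than with anything in this paper, and apart from writing out that deferred lemma in full, the proposal is sound.
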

For the proof of Theorem~\ref{thm:poisson}, we let $\cI$ be the set of all $j$-sets and for all $J$ let $Z_J=\Ind{\deg(J)=S}$, $p_{J}=\Pr\left(\deg(J)=s\right)$, $U_{J}=W=D_s$ and $V_{J}=D_s^*(J_0)$ we obtain
\begin{align}
d_{TV}\big(D_s,\Po(\EE(D_s))\big)&\leq \frac{\sum_{J}\Pr\left(\deg(J)=s\right)\EE\left(\left|D_s-D_s^*(J_0)\right|\right)}{\EE\left(D_s\right)}=\EE\left(\left|D_s-D_s^*\right|\right),\label{Eq:ChenSteinBound}
\end{align}
since $D_s^*(J_0)$ has the same distribution for all $J_0$ by symmetry. Hence it suffices to estimate the random variable $\left|D_s-D_s^*\right|.$

Note that $J_0$ contributes to $\left|D_s-D_s^*\right|$ only if $\deg(J_0)=s$ and in that case no other $j$-set contributes.
If $\deg(J_0)<s$, say $\deg(J_0)=s-t$ for some $t\in[1,s]$, then the only contribution to $\left|D_s-D_s^*\right|$ comes from
$j$-sets $J\neq J_0$ whose degree increased, i.e.\ $\deg^*(J)>\deg(J)$. Moreover there are at most $Ct$ such $j$-sets and it will be
a sufficiently good upper bound to estimate their contribution to $\left|D_s-D_s^*\right|$ by $1$, even though some may not actually contribute.
Similarly, if $\deg(J_0)=s+t$ for some $t\in\big[1,\binom{n-j}{k-j}-s\big]$, then there can also be at most $Ct$ $j$-sets that
could potentially contribute. However, we have to be more careful and observe that for a $j$-set $J$ to contribute it is necessary
to have either $\deg(J)=s$ or $\deg^*(J)=s$. Note that these cannot hold unless $\deg^-(J)\le s$, and we will simply bound the probability of this (more likely) event. Note that $\deg^-(J)$ has distribution
\[
\Bi \left(\binom{n-j}{k-j}-\binom{n-|J_0\cup J|}{k-|J_0\cup J|},p\right),
\]
and the probability that $\deg^-(J)\le s$ is maximised when $|J_0\cup J|$ is minimised. Hence for an upper bound we will assume that $|J_0\cup J|=j+1$, and
by symmetry we may again fix an arbitrary $j$-set $J_1$ satisfying $|J_0\cup J_1|=j+1$. Combining all these arguments we obtain the upper bound
\begin{align*}
\left|D_s-D_s^*\right|\leq\Ind{\deg(J_0)=s} &+\sum_{t=1}^{s}\Ind{\deg(J_0)=s-t}\, Ct+ \hspace{-0.2cm} \sum_{t=1}^{\binom{n-j}{k-j}-s} \hspace{-0.2cm} \Ind{\deg(J_0)=s+t}\Ind{\deg^-(J_1)\le s}\, Ct. 
\end{align*}
Therefore, using the notation $x^+:= \max\{x,0\}$ for any $x\in \mathbb{R}$, we have
\begin{align}\label{Eq:TVError}
\EE\left(\left|D_s-D_s^*\right|\right)\leq \Pr\left(\deg(J_0)=s\right)+C\EE\left(s-\deg(J_0)\right)^+ +C\EE\left(\deg(J_0)-s\right)^+ q,
\end{align}
where 
\begin{align*}
q&=\Pr\left(\deg^-(J_1)\le s\cond \deg(J_0)=s+t \right)=\Pr\left(\deg^-(J_1)\le s\right)
\end{align*}
since $\cH^-$ is independent from the set of edge indicators corresponding to $k$-sets containing $J_0$\,. Both probabilities in \eqref{Eq:TVError} are bounded from above by
\begin{align*}
\Pr\left(\Bi\left(\binom{n-j}{k-j}-\binom{n-j-1}{k-j-1}, p\right)\le s\right)\le \exp\left(-\binom{n}{k-j}p/3\right)=O(n^{-j})
\end{align*}
by a Chernoff bound, since $s$ is bounded. Moreover we have 
\begin{align*}
\EE\left(s-\deg(J_0)\right)^+\leq s\,\Pr\left(\deg(J_0)\le s\right)=O(n^{-j})
\end{align*}
and 
\begin{align*}
\EE\left(\deg(J_0)-s\right)^+\leq \EE(\deg(J_0))+s=O(\log n).
\end{align*}
Therefore~\eqref{Eq:ChenSteinBound} and~\eqref{Eq:TVError} provide~\eqref{Eq:TVBound}, i.e.\ 
\begin{equation}
d_{TV}\left(D_s,\Po\left(\EE\left(D_s\right)\right)\right)=O(n^{-j}\log n).
\end{equation}
Now assume $\lim_{n\to\infty}c_n= c$. By \eqref{Eq:Expectation} we know that $\EE\left(D_s\right)\to \frac{j^s e^{-c}}{j!s!}$ and by the continuity in $\lambda$ of the function $\Pr(\Po(\lambda)=i)$ for each $i$
$$
\Po\left(\EE\left(D_s\right)\right)\stackrel{d}{\longrightarrow}\Po\left(\frac{j^s e^{-c}}{j!s!}\right),
$$
hence by the triangle inequality and \eqref{Eq:TVBound}, case~$(ii)$ in the second claim follows. Cases~$(i)$ and~$(iii)$ can be easily deduced from case~$(ii)$.

\section{Proof of Theorem~\ref{thm:hittingtime}}\label{sec:hittingtime}

The proof which we present is largely elementary except for the use of Theorem~\ref{thm:poisson}, which relies on Theorem~\ref{thm:chen-stein} and one powerful result from~\cite{CoKaKo14}. This result is stated for a much smaller probability than we have in this setting, which is therefore not the optimal range for its application, but nevertheless it will turn out to be strong enough.

\begin{lemma}\label{lem:smoothsubset}
Suppose $n^{-1/3} \ll \eps \ll 1$ and
let $p^*=\frac{1+\eps}{\left(\binom{k}{j}-1\right)\binom{n}{k-j}}$. Then with high probability there is a $j$-component of $\cH^k(n,p^*)$ with a subset $S$ of $j$-sets satisfying the following property:
\begin{center}
\parbox{11cm}{
Every $(j-1)$-set of vertices in $\cH^k(n,p^*)$ is contained in $(1\pm o(1))\eps^3 n$ $j$-sets of $S$.
}
\end{center}
\end{lemma}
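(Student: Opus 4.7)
The plan is to view the lemma as a strengthening of the standard giant $j$-component theorem. At $p^*$ the expected number of ``children'' of a $j$-set in a BFS exploration of the $j$-component structure is $(\binom{k}{j}-1)p^*\binom{n-j}{k-j}=1+\eps+o(1)$, so this BFS is well-modelled by a supercritical Galton--Watson process with survival probability $(2+o(1))\eps$. My first step would be to run such a BFS (whose children at each step are the up to $\binom{k}{j}-1$ new $j$-sets contained in each newly revealed edge) and deduce via the usual stochastic-domination arguments that whp there is a unique giant $j$-component $\mathcal{C}$ of $j$-size $(2\eps+o(\eps))\binom{n}{j}$.

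Next I would identify within $\mathcal{C}$ a ``robust'' subset $S$ such that every $j$-set lies in $S$ with probability $(1+o(1))\eps^3$, uniformly. The $\eps^3$ scaling strongly suggests that membership in $S$ should be governed by three essentially independent survival-type events of probability $\sim \eps$ each; a natural candidate is to split the edge set of $\cH^k(n,p^*)$ into several independent layers, each producing its own near-critical BFS, and declare $J\in S$ iff the BFSs started at $J$ in the appropriate combination of layers each reach a slowly growing number of $j$-sets. Alternatively one can extract $S$ from a suitable high-order $j$-core of $\mathcal{C}$, whose size is known to scale like $\eps^3\binom{n}{j}$ just above the threshold.

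Smoothness would then be proved by fixing a $(j-1)$-set $L$, writing $J_v:=L\cup\{v\}$ for $v\notin L$, and establishing
\begin{equation*}
|\{v\notin L: J_v\in S\}|=(1\pm o(1))\eps^3 n
\end{equation*}
with failure probability $o(n^{-(j-1)})$, so that a union bound over the $\binom{n}{j-1}$ choices of $L$ still succeeds. The expectation is $(1+o(1))\eps^3 n$ by the previous step, and concentration should follow from a Chernoff-type bound once the events $\{J_v\in S\}$ are sufficiently decorrelated via conditioning on the edges containing $L$.

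The main obstacle is precisely this decorrelation. Every edge through $J_v$ also passes through $L$, so the first BFS steps from the different $J_v$'s are all driven by the same pool of $\Theta(n)$ edges through $L$, and a single such edge touches up to $k-j+1$ of the $J_v$'s simultaneously. My proposed fix is a two-stage exposure: first reveal all edges through $L$; then use an independent exposure of the remaining edges to drive the survival conditions. After the first stage, the survival of the BFS from each $J_v$ depends on essentially disjoint edge sets in the second stage, and a standard Chernoff bound then yields concentration with enough margin to absorb the union bound under the hypothesis $\eps\gg n^{-1/3}$.
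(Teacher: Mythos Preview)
The paper does not prove this lemma from scratch; it is quoted from~\cite{CoKaKo14} and only a short paragraph explains how to extract it. The construction there is quite different from yours: one fixes a single $j$-set in the giant component, runs a BFS from it, and proves (Lemma~16 of~\cite{CoKaKo14}) that each \emph{generation} of this BFS is smooth, in the sense that every $(j-1)$-set lies in roughly the same number of $j$-sets of that generation. The set $S$ is then taken to be either a single boundary generation of size $\eps^3 n^j$, or a union of consecutive smooth generations; smoothness of $S$ is inherited directly from smoothness of the individual generations. In particular, $S$ is a deterministic function of the BFS layering once the starting $j$-set is chosen, not a set defined by an independent $\eps^3$-probability membership rule.

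Your plan is a genuinely different route, and it has a quantitative gap that I do not see how to close in the full range $\eps\gg n^{-1/3}$. Even granting perfect independence of the events $\{J_v\in S\}$ over $v\notin L$, you are summing $n-j+1$ Bernoulli variables of mean $(1+o(1))\eps^3$, so the sum has mean $(1+o(1))\eps^3 n$ and a Chernoff bound for $o(1)$ relative deviation gives failure probability at best $\exp\bigl(-o(\eps^3 n)\bigr)$. To survive the union bound over $\binom{n}{j-1}$ choices of $L$ you need this to be $o(n^{-(j-1)})$, which forces $\eps^3 n\gg \log n$, i.e.\ $\eps\gg (\log n/n)^{1/3}$. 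The lemma, however, only assumes $\eps\gg n^{-1/3}$, so $\eps^3 n$ may tend to infinity arbitrarily slowly, and your concentration step fails in that regime. The generation-by-generation argument in~\cite{CoKaKo14} avoids this because smoothness is propagated inductively from one BFS layer to the next rather than established for each $(j-1)$-set by an independent tail bound.

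A second, softer issue: your two-stage exposure does not obviously decorrelate the events $\{J_v\in S\}$ as cleanly as you suggest. After revealing the edges through $L$, the remaining BFS explorations from the various $J_v$'s still live in the same hypergraph on the rest of the edge set and can collide after a few steps; getting ``essentially disjoint edge sets in the second stage'' would itself require a non-trivial argument (e.g.\ truncating the explorations and controlling overlaps), and any such truncation would have to be reconciled with whatever definition of $S$ you finally commit to (layered survival versus $j$-core), neither of which you have pinned down.
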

In other words, we can find a reasonably large subset $S$ of a component which is \emph{smooth} in the sense that all $(j-1)$-sets are in about the ``right'' number of $j$-sets of $S$.

We note that Lemma~\ref{lem:smoothsubset} is not stated explicitly in this form in~\cite{CoKaKo14}, but is implicit in the proof. More precisely, it is proved that with high probability there is a component of size $\Theta(\eps n^j)$ and that with high probability, starting from any $j$-set in this component, a breadth-first search process produces smooth generations from some starting generation $g_0$ up to some stopping generation $g_1$ (Lemma 16 in~\cite{CoKaKo14}). At generation $g_1$, either the boundary has size at least $\eps^3 n^j$, in which case we take this boundary as our set $S$, or the whole component so far has size $\eps^{3/2}n^j$. Furthermore, with high probability $g_0$ is small (Lemmas~24 and~25 in~\cite{CoKaKo14}), so the non-smooth portion of the component is negligible. In the second case, we therefore take the portion of the component between the starting and stopping times as our $S$ and because each generation is smooth, their union is also smooth.

We now proceed with the proof of Theorem~\ref{thm:hittingtime}. Let us consider any $p,M$ satisfying
\[
\frac{j\log n - \omega}{\binom{n}{k-j}}\le p =M/N \le \frac{j\log n + \omega}{\binom{n}{k-j}}
\]
where $\omega:=\log \log n$ and observe that by Theorem~\ref{thm:poisson} and Lemma~\ref{lem:monotonecontiguity}, in both $\cH^k(n,p)$ and $\cH^k(n,M)$, with high probability there are isolated $j$-sets at the lower end of this range but not at the upper end. We would now like to say that other than these isolated $j$-sets, there is just one very large component.

We set $p^\dagger:= \frac{p-p^*}{1-p^*}$ and set $\cH_1:= \cH(n,p^*)$ and $\cH_2:=\cH(n,p^\dagger)$. Observe that we may couple in such a way that $\cH^k(n,p) = \cH_1\cup \cH_2$. Furthermore, by Lemma~\ref{lem:smoothsubset}, with high probability $\cH_1$ has a component containing a smooth set $S$. In $\cH^k(n,p)$ this component may be bigger than in $\cH_1$, but certainly still contains $S$. We consider the possibility that there is a second non-trivial component containing $r$ $j$-sets, and make a case distinction on the size of $r$.

In all cases we will use the following proposition. We say that a hypergraph is $\emph{well-constructed}$ if can be generated from an initial $j$-set via a search process, i.e.\ by successively adding edges such that each edge contains at least one previously discovered $j$-set, and such that each edge also contains at least one previously undiscovered $j$-set. Note that for any $j$-component of size $r$ in a $k$-uniform hypergraph, there is a well-constructed subhypergraph of every (up to a constant $\binom{k}{j}$ error) $j$-size up to $r$.

\begin{proposition}\label{prop:structurecount}
Up to isomorphism, the number of well-constructed $k$-uniform hypergraphs of $j$-size $s$ is at most $2^{ks^2}$.
\end{proposition}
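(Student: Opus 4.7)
The plan is to upper-bound the number of isomorphism classes by encoding each well-constructed hypergraph via the sequence of choices made during a valid search process, and then counting those encodings.

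First I will fix a canonical labeling. The vertices of the initial $j$-set $J_0$ receive labels $1,\ldots,j$, and each vertex discovered subsequently is given the next available positive integer label in the order in which it first appears in an edge; the edges themselves are listed as $e_1,\ldots,e_m$ in the order they are added by the search. With these conventions, the hypergraph is fully determined once we specify, for each $i$, the subset $S_i\subseteq\{1,\ldots,V_{i-1}\}$ of previously discovered vertices contained in $e_i$, where $V_{i-1}$ denotes the total number of vertices discovered before step $i$; the remaining $k-|S_i|$ vertices of $e_i$ are then forced to be $V_{i-1}+1,\ldots,V_{i-1}+k-|S_i|$ by the labeling rule. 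Since every isomorphism class is realised by at least one such encoding (take any valid search order on a representative), counting encodings will give the desired upper bound.

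Second, I will bound $m$ and $V_{i-1}$ in terms of $s$. By definition each edge in a well-constructed hypergraph contains at least one previously undiscovered $j$-set, so each edge contributes at least $1$ to the $j$-size, giving $m\le s$. Each edge also contains at least one previously discovered $j$-set, which uses at least $j$ old vertices, so it introduces at most $k-j$ new vertices, and therefore $V_{i-1}\le j+(i-1)(k-j)\le sk$.

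Third I will assemble the count: for each $i$ the number of admissible subsets $S_i\subseteq\{1,\ldots,V_{i-1}\}$ is at most $2^{V_{i-1}}\le 2^{sk}$, and multiplying over the $m\le s$ edges gives at most $(2^{sk})^m\le 2^{ks^2}$ encodings. The only mildly subtle point is verifying that the canonical labeling together with the sequence $(S_i)_{i=1}^m$ genuinely reconstructs the hypergraph, which is immediate by induction on $i$. I do not expect any real obstacle here — the argument is a standard search-process encoding and the bound $2^{ks^2}$ is quite loose, leaving ample slack.
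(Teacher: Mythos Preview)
Your proposal is correct and follows essentially the same approach as the paper: encode the hypergraph by the sequence of intersections of each added edge with the previously discovered vertex set, bound the number of edges by $s$ and the vertex count by $O(sk)$, and multiply. The only cosmetic difference is that the paper uses the slightly sharper per-step bound $V_{i-1}\le (i-1)k$ (yielding $2^{k\sum_{i=1}^{s}(i-1)}=2^{ks(s-1)/2}$ before relaxing to $2^{ks^2}$), whereas you use the uniform bound $V_{i-1}\le sk$; both arrive at the same final estimate.
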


\begin{proof}
We explore the hypergraph by adding the edges one by one in the order in which it is well-constructed. The resulting hypergraph is uniquely determined, up to isomorphism, by the intersection of each edge with the previous vertices (though we will multiple count the isomorphism classes, this is permissible for an upper bound). When adding the $i$-th edge, we certainly have at most $(i-1)k$ vertices so far, and so the number of possible intersections is at most $2^{(i-1)k}$. Multiplying over all edges, of which there are certainly at most $s$ (each edge gives at least one new $j$-set), we have that the number of such hypergraphs is at most $2^{\sum_{i=1}^s(i-1)k} \le 2^{ks^2}$.
\end{proof}

We now continue with the Proof of Theorem~\ref{thm:hittingtime}. Let us set $\omone:=\log \log n$.

\smallskip

\noindent \textbf{Case 1:} $2\le r \le \omone$.

Let us first observe that in a component of size $r \ge 2$ we must have at least one edge, and therefore at least $\binom{k}{j} \ge k \ge 3$ $j$-sets, i.e. we automatically have $r\ge 3$.

We show that the expected number of components of size $r$ is very small and apply Markov's inequality. Any component of size $r$ can be associated with a well-constructed hypergraph $H$ of $j$-size $r$ which is isolated from the remaining $j$-sets of $\hknp$. Then $e(H)\le r$ and furthermore $|H|\le j+(k-j)e(H)$, since each new edge of $H$ gives at most $k-j$ new vertices. For each $j$-set of $H$, we have at least $\binom{n-j}{k-j}-r\binom{n-j-1}{k-j-1}$ non-edges (any $k$-set containing this $j$-set but no other $j$-sets of $H$). Thus the expected number of isolated copies of $H$ in $\cH^k(n,p)$ satisfies
\[
\EE (X_H) \le n^{j+(k-j)e(H)}p^{e(H)}(1-p)^{r\left(\binom{n-j}{k-j}-r\binom{n-j-1}{k-j-1}\right)}
\]
and so
\begin{align*}
\log(\EE (X_H)) \le \; & \left(j+(k-j)e(H)\right)\log n+O(r\log \log n) \\
& \hspace{0.5cm}- (k-j)e(H)\log n -(1-O(r/n)-O(\omega/\log n))rj\log n\\
= \; & (1-r+o(1))j\log n \le (-3rj/5)\log n.
\end{align*}
Note that this bound does not depend on the specific structure of $H$, only on the number of $j$-sets $r$. Let $X_r$ be the number of components of size $r$. Then by Proposition~\ref{prop:structurecount} we have
\[
\EE (X_r) \le 2^{kr^2} n^{-3rj/5} \le n^{-4rj/7}
\]
where for the last inequality we use the fact that $r\le \omone = o(\log n)$.

By taking a union bound over all $3\le r \le \omone$, we conclude that with probability at least $1-2n^{-12j/7}$ there are no $j$-components of this size.

\smallskip

\noindent \textbf{Case 2:} $r \ge \omone$.

In this case, rather than looking at the full component we look at a well-constructed subgraph $H$ of $j$-size $\omone$. Such a subgraph certainly exists up to a $\binom{k}{j}$ error term in the $j$-size, which will not affect calculations significantly. Many of the calculations from Case~1 are still valid, replacing $r$ by $\omone$. However, since we are no longer considering a full component, we must be more careful about the number of non-edges.

At this point we make use of the set $S$ of $j$-sets which lie in a different component to $H$. For each of the $\omone$ $j$-sets of $H$, pick an arbitrary $(j-1)$-set within it and by Lemma~\ref{lem:smoothsubset}, this $(j-1)$-set is contained in $(1\pm o(1))\eps^3 n$ $j$-sets of $S$. For each such pair of $j$-sets intersecting in $j-1$ vertices, there are $\binom{n-j-1}{k-j-1}$ $k$-sets containing both of them, all of which must be non-edges, since the $j$-sets lie in different components.

It may be that we multiple count the non-edges in this way. However, each $k$-set may only be counted from a pair of $j$-sets it contains, and therefore the number of times it is counted is certainly at most $\binom{k}{j}(k-j) \le 2^k$. Thus in total the number of non-edges is at least
\[
2^{-(k+1)}\omone \eps^3 n \binom{n}{k-j-1} = \Theta\left(\omone\eps^3 n^{k-j}\right).
\]
We may thus calculate the expected number of such structures $H$:
\[
\EE (X_H) \le n^{j+(k-j)e(H)}p^{e(H)}(1-p)^{\Theta(\omone \eps^3 n^{k-j})}
\]
and so, letting $Y$ be the number of such well-constructed hypergraphs of $j$-size $\log \log n$ which are not in the same component as $S$, we have
\begin{align*}
\log(\EE (Y)) & \le k\omone^2\log 2 + j\log n+O\left(\omone \log \log n \right) - \Theta\left(\omone \eps^3 \log n\right).
\end{align*}
Now observe that in Lemma~\ref{lem:smoothsubset} we may choose any $n^{-1/3} \ll \eps \ll 1$. In particular, choosing $\eps^3=\frac1{\log \log \log n}$, we have $\omone \eps^3 \to \infty$ and the last term in the above inequality dominates, and we have $\log(\EE(Y)) \le -C_0\log n$ for any constant $C_0$. In particular, choosing $C_0=12j/7$, we have $\EE(Y)\le n^{-12j/7}$. By Markov's inequality, this implies that with probability at least $1-n^{-12j/7}$ we have $Y=0$ and therefore no further components of size $r$.

Combining the two cases, this tells us that with probability at least $1-3n^{-12j/7}$, $\cH^k(n,p)$ only has one non-trivial component.

Finally note that $M=pN = \Theta(n^j\log n)$. Thus by Lemma~\ref{lem:ptoM} we conclude that with probability at least $1-3n^{-12j/7}\sqrt{M} = 1-o(n^{-8/7})$, $\cH^k(n,M)$ also has only one non-trivial $j$-component.

We now take a union bound over all possible $M$, of which there are at most $\frac{2\omega}{\binom{n}{k-j}}\binom{n}{k}=O(\omega n^j)$, and deduce that the probability that there is ever a second non-trivial $j$-component within this time period is at most
\[
O(\omega n^j)n^{-8j/7} = O(\omega n^{-j/7}) = o(1)
\]
as required.

\section{Concluding remark}
In~\cite{Poole14}, it is determined for the case $j=1$ that the hitting time for \emph{d-strong} $1$-connectedness, i.e.\ the time at which the hypergraph first has the property that deleting any set of less than $d$ vertices still leaves a $1$-connected hypergraph, is the same as the hitting time for having no vertices of degree less than $d$ with high probability. It would be interesting to generalise this result to $d$-strong $j$-connectedness (removing fewer than $d$ $j$-sets still leaves a $j$-connected hypergraph), which is presumably attained with high probability when every $j$-set has degree at least $d$. However, this would present significant additional difficulties, not least that Lemma~\ref{lem:smoothsubset} would no longer give the substructure which we require.

\ 

\bibliographystyle{amsplain}
\bibliography{CTBibliography}

\ 

\end{document}